\newtheorem{theorem}{Theorem}[section]
\newtheorem{es}[theorem]{Example}
\newtheorem{remark}[theorem]{Remark}
\def\erre{{\rm I\!R}}
\def\RR{{\rm I\!R}}
\def\R{{\rm I\!R}}
\def\R{{\rm I\!R}}
\def\R{{\rm I\!R}}
\def\phi{\varphi}
\def\osc{\mathop{\rm osc}}
\title[Higher nonlocal problems]{Higher nonlocal problems with\\ bounded potential}
\author{Giovanni Molica Bisci}
\address[G. Molica Bisci]{Dipartimento P.A.U., Universit\`a  degli
Studi Mediterranea di Reggio Calabria, Salita Melissari - Feo di
Vito, 89124 Reggio Calabria, Italy} \email{gmolica@unirc.it}
\author{Du\v{s}an Repov\v{s}}
\address[D. Repov\v{s}]{Faculty of Education, and Faculty of Mathematics and Physics\\ University of Ljubljana, POB 2964, Ljubljana, Slovenia 1001}
\email{dusan.repovs@guest.arnes.si}
\thanks{{\it 2010 Mathematics Subject Classification.} Primary:
35J62, 35J92, 35J20;
Secondary: 35J15, 47J30.}
\keywords{Fractional equations; Multiple solutions; Critical points results.}
\thanks{Typeset by \LaTeX}
\begin{document}
\begin{abstract}
 The aim of this paper is to study a class of nonlocal fractional Laplacian equations depending on two real parameters. More precisely, by using an appropriate analytical context on fractional Sobolev spaces due to Servadei and Valdinoci, we establish the existence of three weak solutions for nonlocal fractional problems exploiting an abstract critical point result for smooth functionals. We emphasize that the dependence of the underlying equation from one of the real parameter is not necessarily of affine type.
\end{abstract}
\maketitle

\section{Introduction}
\indent This paper is devoted to the the following two-parameter nonlocal problem, namely $(P_{M,K,f}^{\mu,\lambda,h})$:
$$
\left\{\begin{array}{ll}
-M(\|u\|_{X_0}^2)\mathcal L_K u=\mu h\left(\displaystyle\int_\Omega \left(\int_0^{u(x)}f(x,t)dt\right)dx-\lambda\right)f(x,u) & \mbox{in}\,\,\Omega\\
u=0\,\,\,\mbox{in}\,\,\,\erre^n\setminus\Omega.
\end{array}\right.
$$

Here and in the sequel, $\Omega$ is a bounded domain in $({\R}^{n},|\cdot|)$ with $n>2s$ (where $s\in (0,1)$),
smooth (Lipschitz)
boundary $\partial \Omega$ and Lebesgue measure $|\Omega|$, $f:\Omega\times\erre\rightarrow \erre$ is a Carath\'{e}odory function with subcritical growth, $\lambda$ and $\mu$ are real parameters, $M,h$ are two suitable continuous functions and
$$
\|u\|_{X_0}^2:=\int_{\erre^n\times \erre^n}|u(x)-u(y)|^2K(x-y)dxdy.
$$

 \indent Further, $\mathcal L_K$ is a nonlocal operator defined as follows:
$$
\mathcal L_Ku(x):=
\int_{\erre^n}\Big(u(x+y)+u(x-y)-2u(x)\Big)K(y)dy,
\,\,\, (x\in \erre^n)
$$
where $K:\erre^n\setminus\{0\}\rightarrow(0,+\infty)$ is a function with the properties that:
\begin{itemize}
\item[$(\rm k_1)$] $\gamma K\in L^1(\erre^n)$, \textit{where} $\gamma(x):=\min \{|x|^2, 1\}$;
\item[$(\rm k_2)$] \textit{there exists} $\beta>0$
\textit{such that} $$K(x)\geq \beta |x|^{-(n+2s)},$$
{\textit{for any}} $x\in \erre^n \setminus\{0\}$;
\item[$(\rm k_3)$] $K(x)=K(-x)$, \textit{for any} $x\in \erre^n \setminus\{0\}$.
\end{itemize}

\indent A typical example of the kernel $K$ is given by $K(x):=|x|^{-(n+2s)}$. In this case $\mathcal L_K$ is the fractional Laplace operator defined as
$$
-(-\Delta)^s u(x):=
\int_{\erre^n}\frac{u(x+y)+u(x-y)-2u(x)}{|y|^{n+2s}}\,dy,
\,\,\,\,\, x\in \erre^n.
$$

\indent Problem $(P_{M,K,f}^{\mu,\lambda,h})$ is clearly highly nonlocal due to the presence of the fractional operator $\mathcal L_K$ and to the map $M$ as well as in the source term $f$. In our context, to avoid some additional technical difficulties originated by the presence of the term
$$M\Bigg(\displaystyle\int_{\erre^{n}\times\erre^n}|u(x)-u(y)|^2K(x-y)dxdy\Bigg),$$
we impose some restrictions on the behavior of $M$ (see Section 3).\par
 This setting includes the Kirchhoff-type problem of the form
$$
\left\{\begin{array}{ll}
-(a+b\|u\|_{X_0}^2)\mathcal L_K u=v(\mu,\lambda,h,f)&\mbox{in}\,\,\,\Omega\\
u=0 & \mbox{in}\,\,\,\erre^n\setminus\Omega,
\end{array}\right.
$$
where $a,b>0$ and
$$
v(\mu,\lambda,h,f):=\mu h\left(\displaystyle\int_\Omega \left(\int_0^{u(x)}f(x,t)dt\right)dx-\lambda\right)f(x,u),
$$
see Remark \ref{RK}.\par
\indent For completeness, in the vast literature on this subject, we refer the reader to some interesting recent results (in the non-fractional setting) obtained by Autuori and Pucci in \cite{AP1,AP2,AP3}
studying Kirchhoff equations by using different approaches.

We seek conditions on the data for which problem $(P_{M,K,f}^{\mu,\lambda,h})$ possesses at least three weak solutions. It is worth pointing out that the variational approach to attack such problems is not often easy to perform; indeed due to the presence of the nonlocal term, variational methods does not to work when applied to these classes of  equations.\par
 Fortunately, our approach here is realizable by checking that the associated energy functional (see Section 3) given by
$$
J_K(u):=\frac 1 2 \widehat{M}(\|u\|_{X_0}^2) -{\mu}H\left(\displaystyle\int_\Omega F(x,u(x))dx-\lambda\right),
$$

\noindent satisfies the assumptions requested by a recent critical point theorem (see Theorem \ref{CV} below) obtained by Ricceri in \cite[Theorem 1.6]{R3} and thanks to a suitable framework developed in \cite{svmountain}.\par

We emphasize that in \cite[Theorem 1.6]{R3} Ricceri established
a theorem tailor-made for a class of nonlocal problems involving
nonlinearities with bounded primitive. This result follows from \cite[Theorem 3]{R2} and the main novelty obtained in the most recent paper \cite{R3} is that, in contrast
with a large part of the existing literature, the abstract energy functional does not depend
on the parameter $\lambda$ in an affine way.\par

 The nonlocal analysis (see Section 2) that we perform here
in order to use Theorem \ref{CV} is quite general and has been successfully exploited for other goals
in several recent contributions; see \cite{sv,svmountain,svlinking,servadeivaldinociBN} and \cite{valpal}
for an elementary introduction to this topic and for a list of related references.\par
 In the nonlocal framework, the simplest example we can deal with is
given by the fractional Laplacian, according to the following
result.

\begin{theorem}\label{lapfra0}
Let $s\in (0,1)$, $n>2s$ and let $\Omega$ be an open bounded set of $\erre^n$
with
Lipschitz boundary. Moreover, let $f:\erre\rightarrow \erre$ be a non-zero continuous function such that
$$
\displaystyle\sup_{\xi\in\erre}|F(\xi)|<+\infty,
$$
where $
F(\xi):=\displaystyle\int_0^{\xi}f(t)\,dt,
$
for every $\xi\in \erre$. Further, let $$h: \left(-|\Omega|\displaystyle\osc_{\xi\in\erre}F(\xi),|\Omega|\displaystyle\osc_{\xi\in\erre}F(\xi)\right)\rightarrow \erre$$ be a continuous and non-decreasing function such that $h^{-1}(0)=\{0\}$.\par
 \indent Then, fixing $a,b>0$, for each $\mu$ sufficient large, there exists an open interval
 $$\Lambda\subseteq \left(|\Omega|\displaystyle\inf_{\xi\in\erre}F(\xi),|\Omega|\displaystyle\sup_{\xi\in\erre}F(\xi)\right)$$ and a number $\rho>0$ such that, for every $\lambda\in \Lambda$, the following equation
$$
\begin{array}{l} {\displaystyle \Big(a+b\displaystyle\int_{\erre^{2n}}\frac{|u(x)-u(y)|^2}{|x-y|^{n+2s}}\,dxdy\Big) \int_{\erre^{2n}}
\frac{(u(x)-u(y))(\varphi(x)-\varphi(y))}{|x-y|^{n+2s}}dxdy}\\
\displaystyle \qquad\qquad\qquad\qquad\qquad\quad=\mu h\left(\displaystyle\int_\Omega F(u(x))dx-\lambda\right)\int_\Omega f(u(x))\varphi(x)dx,\,\,\,\,\,\,\,
\end{array}
$$
for every
$$
\varphi\in H^s(\erre^n)\,\, {\rm such\,\, that}\,\, \varphi= 0\,\,\, {\rm a.e.\,\, in}\,\, \erre^n\setminus\Omega,
$$
has at least three distinct weak solutions $\{u_j\}_{j=1}^{3}\subset H^s(\erre^n)$,
such that $u_j=0$ a.e. in $\erre^n\setminus\Omega$, and
$$
\int_{\RR^n\times\RR^n}\frac{|u_j(x)-u_j(y)|^2}{|x-y|^{n+2s}}\,dxdy<\rho^2,
$$
for every $j\in \{1,2,3\}$.
\end{theorem}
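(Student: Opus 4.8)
The plan is to recognize this statement as the specialization, to the fractional Laplacian kernel $K(x)=|x|^{-(n+2s)}$ and the affine Kirchhoff function $M(t)=a+bt$, of the variational machinery built around the energy functional $J_K$. First I would fix the natural work space $X_0$, namely the space of functions in $H^s(\erre^n)$ vanishing a.e.\ outside $\Omega$, endowed with the Gagliardo norm $\|\cdot\|_{X_0}$; this is a real Hilbert space, hence reflexive, and by the framework of \cite{svmountain} the weak solutions of the displayed equation are precisely the critical points of
$$
J_K(u)=\frac{1}{2}\widehat{M}(\|u\|_{X_0}^2)-\mu H\Big(\int_\Omega F(u(x))\,dx-\lambda\Big),
$$
where $\widehat{M}(t)=at+\frac{b}{2}t^2$ and $H(t)=\int_0^t h(s)\,ds$. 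The heart of the argument is then to verify, one by one, the hypotheses of Ricceri's critical point theorem (Theorem \ref{CV}) for the pair
$$
\Phi(u):=\frac{1}{2}\widehat{M}(\|u\|_{X_0}^2),\qquad \Psi(u):=\int_\Omega F(u(x))\,dx.
$$

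Next I would check the required regularity and compactness. Since $\widehat{M}$ is smooth and $\Phi(u)\geq \frac{a}{2}\|u\|_{X_0}^2$, the functional $\Phi$ is of class $C^1$, sequentially weakly lower semicontinuous and coercive, while $\Phi'(u)=(a+b\|u\|_{X_0}^2)\langle u,\cdot\rangle$ is strongly monotone (because $a>0$), hence invertible with continuous inverse. For $\Psi$, the subcritical growth of $f$ together with the compact embedding $X_0\hookrightarrow L^q(\Omega)$ for $q$ below the fractional critical exponent $2n/(n-2s)$ yields that $\Psi$ is $C^1$ with compact derivative and, crucially, sequentially weakly continuous. The bound $\sup_{\xi\in\erre}|F(\xi)|<+\infty$ guarantees that the range of $\Psi$ is contained in $\big(|\Omega|\inf_{\xi\in\erre}F,\,|\Omega|\sup_{\xi\in\erre}F\big)$, so that the composition $H(\Psi(u)-\lambda)$ is well defined precisely when $\lambda$ is taken in an interval $\Lambda\subseteq\big(|\Omega|\inf_{\xi\in\erre}F,\,|\Omega|\sup_{\xi\in\erre}F\big)$, and the argument $\Psi(u)-\lambda$ then remains inside the domain $\big(-|\Omega|\osc_{\xi\in\erre}F,\,|\Omega|\osc_{\xi\in\erre}F\big)$ on which $h$ lives.

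With these properties in hand, I would invoke Theorem \ref{CV} to produce, for every sufficiently large $\mu$, an interval $\Lambda$ and a radius $\rho>0$ such that, for each $\lambda\in\Lambda$, the functional $J_K$ admits at least three distinct critical points, all lying in the ball $\{u:\|u\|_{X_0}<\rho\}$ --- the last inclusion being exactly the coercivity-driven sublevel bound that translates into the stated inequality $\int_{\RR^n\times\RR^n}|u_j(x)-u_j(y)|^2/|x-y|^{n+2s}\,dxdy<\rho^2$. The monotonicity of $h$ and the condition $h^{-1}(0)=\{0\}$ are what make $H$ convex with a unique minimum, which is the structural feature Ricceri's non-affine theorem exploits to localize the three critical points; note that the presence of $H$ is precisely what breaks the affine dependence on $\lambda$ of the classical three-solution schemes.

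The main obstacle I anticipate is not the regularity bookkeeping but the verification of the precise quantitative inequality in Theorem \ref{CV} that separates the three critical values. This requires estimating the interplay between $\Phi$ and $\Psi$ on suitable test functions (typically truncations supported in $\Omega$) in order to exhibit the required geometric gap; here the nonlocal Kirchhoff structure $\widehat{M}(\|u\|_{X_0}^2)=a\|u\|_{X_0}^2+\frac{b}{2}\|u\|_{X_0}^4$ forces one to control a quartic term, and the fractional Gagliardo seminorm must be estimated rather than the classical Dirichlet energy. Establishing that this inequality holds for $\mu$ large --- together with the weak continuity of $\Psi$, which secures the Palais--Smale compactness despite the nonlocal coefficient $M$ --- is the crux of the argument.
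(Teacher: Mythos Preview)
Your approach is essentially the paper's: Theorem~\ref{lapfra0} is derived as the special case $K(x)=|x|^{-(n+2s)}$, $M(t)=a+bt$ of Theorem~\ref{Esistenza} (cf.\ Remarks~\ref{Ricc}--\ref{RK}), and the proof of Theorem~\ref{Esistenza} is exactly the verification of the hypotheses of Ricceri's Theorem~\ref{CV} with $E=X_0$, $\eta=0$, $\psi=\Phi$, $J=\Psi$, and $\varphi=H$, just as you outline in your second and third paragraphs. Your argument for the continuous invertibility of $\Phi'$ via strong monotonicity is a legitimate alternative to the paper's explicit construction using the function $v_M$ of condition $(C_M^2)$.

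Your final paragraph, however, anticipates an obstacle that does not exist. Theorem~\ref{CV} requires \emph{no} quantitative ``gap'' inequality between $\Phi$ and $\Psi$, no test-function constructions, and no Palais--Smale verification. Once the structural hypotheses are checked --- $J$ is $C^1$, bounded, non-constant, with compact derivative; $\psi$ is $C^1$, coercive, sequentially weakly lower semicontinuous, with $\psi'$ continuously invertible; $\varphi=H$ is $C^1$, convex, non-negative, with $\varphi^{-1}(0)=\{0\}$ --- the conclusion holds for every $\mu>\theta^\star$. Here $\theta^\star\geq 0$ is automatic because $\eta=0$ and both $\psi$ and $\varphi$ are non-negative, so ``$\mu$ sufficiently large'' simply means $\mu$ exceeds the explicit infimum displayed in Theorem~\ref{Esistenza}. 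The quartic Kirchhoff term plays no role beyond the coercivity estimate $\Phi(u)\geq\frac{a}{2}\|u\|_{X_0}^2$ you already noted. The paper's proof of Theorem~\ref{Esistenza} is precisely the hypothesis-checking of your middle paragraphs, and nothing more.
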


\indent The plan of the paper is as follows. Section 2 is devoted to our abstract framework and preliminaries. Successively, in Section 3 we give the main result; see Theorem \ref{Esistenza}. Finally, a concrete example of an application is presented in Example \ref{esempio0}.\par
\indent We cite the monograph \cite{k2} for related topics on variational methods adopted in this paper and \cite{c1,c2,c3} for recent nice results in the fractional setting.

\section{Variational Framework}\label{section2}

In this subsection we briefly recall the definition of the functional space~$X_0$, firstly introduced in \cite{sv, svmountain}.
The reader familiar with this topic may skip this section and go directly to the next one. The functional space $X$ denotes the linear space of Lebesgue
measurable functions from $\RR^n$ to $\RR$ such that the restriction
to $\Omega$ of any function $g$ in $X$ belongs to $L^2(\Omega)$ and
$$
((x,y)\mapsto (g(x)-g(y))\sqrt{K(x-y)})\in
L^2\big((\RR^n\times\RR^n) \setminus ({\mathcal C}\Omega\times
{\mathcal C}\Omega), dxdy\big),$$ where ${\mathcal C}\Omega:=\RR^n
\setminus\Omega$. We denote by $X_0$ the following linear
subspace of $X$
$$X_0:=\big\{g\in X : g=0\,\, \mbox{a.e. in}\,\,
\RR^n\setminus
\Omega\big\}.$$
\indent We remark that $X$ and $X_0$ are non-empty, since $C^2_0 (\Omega)\subseteq X_0$ by \cite[Lemma~11]{sv}.\par
\indent Moreover, the space $X$ is endowed with the norm defined as
$$
\|g\|_X:=\|g\|_{L^2(\Omega)}+\Big(\int_Q |g(x)-g(y)|^2K(x-y)dxdy\Big)^{1/2}\,,
$$
where $Q:=(\RR^n\times\RR^n)\setminus \mathcal O$ and
${\mathcal{O}}:=({\mathcal{C}}\Omega)\times({\mathcal{C}}\Omega)\subset\RR^n\times\RR^n$. It is easily seen that $\|\cdot\|_X$ is a norm on $X$; see \cite{svmountain}.\par
 By \cite[Lemmas~6 and 7]{svmountain} in the sequel we can take the function
\begin{equation}\label{normaX0}
X_0\ni v\mapsto \|v\|_{X_0}:=\left(\int_Q|v(x)-v(y)|^2K(x-y)dxdy\right)^{1/2}
\end{equation}
as a norm on $X_0$. Also $\left(X_0, \|\cdot\|_{X_0}\right)$ is a Hilbert space with scalar product
$$
\langle u,v\rangle_{X_0}:=\int_Q
\big( u(x)-u(y)\big) \big( v(x)-v(y)\big)\,K(x-y)dxdy,
$$
see \cite[Lemma~7]{svmountain}.\par
 Note that in (\ref{normaX0}) (and in the related scalar product) the integral can be extended to all $\RR^n\times\RR^n$, since $v\in X_0$ (and so $v=0$ a.e. in $\RR^n\setminus \Omega$).\par

\indent While for a general kernel~$K$ satisfying
conditions from $(\rm k_1)$ to $(\rm k_3)$ we have that
$X_0\subset H^s(\RR^n)$, in the model case $K(x):=|x|^{-(n+2s)}$ the
space $X_0$ consists of all the functions of the usual fractional
Sobolev space $H^s(\RR^n)$ which vanish a.e. outside $\Omega$; see
\cite[Lemma~7]{servadeivaldinociBN}.\par
 Here $H^s(\RR^n)$ denotes
the usual fractional Sobolev space endowed with the norm (the
so-called \emph{Gagliardo norm})
$$
\|g\|_{H^s(\RR^n)}=\|g\|_{L^2(\RR^n)}+
\Big(\int_{\RR^n\times\RR^n}\frac{\,\,\,|g(x)-g(y)|^2}{|x-y|^{n+2s}}\,dxdy\Big)^{1/2}.
$$

\indent Before concluding this subsection, we recall the embedding
properties of~$X_0$ into the usual Lebesgue spaces; see
\cite[Lemma~8]{svmountain}. The embedding $j:X_0\hookrightarrow
L^{\nu}(\RR^n)$ is continuous for any $\nu\in [1,2^*]$, while it is
compact whenever $\nu\in [1,2^*)$, where $2^*:=2n/(n-2s)$ denotes the \textit{fractional critical Sobolev exponent}.\par

\indent For further details on the fractional Sobolev spaces we refer
to~\cite{valpal} and to the references therein, while for other
details on $X$ and $X_0$ we refer to \cite{sv}, where these
functional spaces were introduced, and also to \cite{sY, svmountain,
svlinking, servadeivaldinociBN}, where various properties of these spaces were
proved.\par
Finally, our abstract tool for proving the main result of the present paper is \cite[Theorem 1.6]{R3} that we recall here for reader's convenience.
 \begin{theorem}\label{CV}
 Let $(E,\|\cdot\|)$ be a separable and reflexive real Banach space and let $\eta, J:E\rightarrow \erre$ be two $C^1$-functionals with compact derivative and $J(0_E)=\eta(0_E)=0$. Assume also that $J$ is bounded and non-constant, and that $\eta$ is bounded above. Then, for every sequentially weakly lower semicontinuous and coercive $C^1$-functional $\psi:E\rightarrow \erre$ whose derivative admits a continuous inverse on $E^*$ and with $\psi(0_E)=0$, for every convex $C^1$-function $$\varphi:\left(-\displaystyle\osc_{u\in E}J(u),\displaystyle\osc_{u\in E}J(u)\right)\rightarrow [0,+\infty),$$ with $\varphi^{-1}(0)=\{0\}$, for which the number
 $$
 {\theta^{\star}}:=\displaystyle\inf_{u\in J^{-1}\left(\left(\displaystyle\inf_{u\in E}J(u), \displaystyle\sup_{u\in E}J(u)\right)\setminus \{0\}\right)}\frac{\psi(u)-\eta(u)}{\varphi(J(u))}
 $$
 is non-negative, and for every $\mu>\theta^{\star}$ there exists an open interval $$\Lambda\subseteq \left(\displaystyle\inf_{u\in E}J(u), \displaystyle\sup_{u\in E}J(u)\right)$$ and a number $\rho>0$ such that, for each $\lambda\in \Lambda$, the equation
 $$
 \psi'(u)=\mu\varphi'(J(u)-\lambda)J'(u)+\eta'(u)
 $$
 has at least three distinct solutions whose norms are less than $\rho$.
 \end{theorem}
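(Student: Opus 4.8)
The plan is to apply Ricceri's abstract Theorem \ref{CV} in the model case $K(x):=|x|^{-(n+2s)}$, $M(t):=a+bt$ with autonomous source $f=f(\xi)$, so that the weak formulation in the statement is literally the abstract Euler--Lagrange equation for a suitable choice of functionals. Accordingly, I take $E:=X_0$, which for this kernel (see Section \ref{section2}) is the closed subspace of $H^s(\erre^n)$ of functions vanishing a.e. outside $\Omega$, endowed with the norm $\|\cdot\|_{X_0}$ of \eqref{normaX0}; it is a separable, reflexive (Hilbert) space compactly embedded in $L^{\nu}(\erre^n)$ for every $\nu\in[1,2^*)$. I then set
$$
\psi(u):=\frac{a}{2}\|u\|_{X_0}^2+\frac{b}{4}\|u\|_{X_0}^4,\qquad J(u):=\int_\Omega F(u(x))\,dx,\qquad \eta\equiv 0,
$$
and let $H(\tau):=\int_0^\tau h(t)\,dt$ play the role of the convex function in Theorem \ref{CV}. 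Since $\psi'(u)[\varphi]=(a+b\|u\|_{X_0}^2)\langle u,\varphi\rangle_{X_0}$, $J'(u)[\varphi]=\int_\Omega f(u)\varphi\,dx$, $H'=h$ and $\eta'=0$, the abstract equation $\psi'(u)=\mu H'(J(u)-\lambda)J'(u)+\eta'(u)$ coincides, for test functions $\varphi\in X_0$, with the weak formulation in the statement.

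The next block of steps is to check the hypotheses of Theorem \ref{CV}. The functional $\psi$ is $C^1$, coercive and sequentially weakly lower semicontinuous (an increasing continuous function of the squared norm), with $\psi(0)=0$; moreover $\psi'$ is strictly monotone, coercive and continuous, hence a homeomorphism of $X_0$ onto $X_0^*$ with continuous inverse (the standard Kirchhoff verification, where $a,b>0$ is used crucially). Because $F(0)=0$ and $|F|$ is bounded, $J$ is well defined, with $|J(u)|\le|\Omega|\sup_{\xi\in\erre}|F(\xi)|$, hence bounded; it is non-constant since $f\not\equiv 0$, and $J(0)=0$. By the compact embeddings recalled in Section \ref{section2} together with the continuity of $f$, $J$ is of class $C^1$ with compact derivative; the functional $\eta\equiv 0$ trivially has compact derivative, is bounded above and vanishes at $0$. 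Finally, since $h$ is continuous, non-decreasing with $h^{-1}(0)=\{0\}$, its primitive $H$ is a nonnegative convex $C^1$ function with $H^{-1}(0)=\{0\}$.

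The key computation is the identification of the oscillation of $J$. I would prove
$$
\inf_{u\in X_0}J(u)=|\Omega|\inf_{\xi\in\erre}F(\xi),\qquad \sup_{u\in X_0}J(u)=|\Omega|\sup_{\xi\in\erre}F(\xi),
$$
whence $\osc_{u\in X_0}J(u)=|\Omega|\osc_{\xi\in\erre}F(\xi)$. The bounds $\inf_u J\ge|\Omega|\inf_\xi F$ and $\sup_u J\le|\Omega|\sup_\xi F$ are immediate from pointwise estimates on $F$. For the reverse inequalities, given $\xi_0$ nearly attaining $\sup_\xi F$ (resp. $\inf_\xi F$), I insert test functions $u_\varepsilon\in C^2_0(\Omega)\subset X_0$ equal to $\xi_0$ on $\{x\in\Omega:\,\mathrm{dist}(x,\partial\Omega)>\varepsilon\}$ and bounded by $|\xi_0|$, so that $J(u_\varepsilon)\to|\Omega|F(\xi_0)$ as $\varepsilon\to 0$ (the boundary contribution being controlled since $F$ is bounded). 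This identity is exactly what makes the domain of $H$ equal to $\left(-\osc_{u\in X_0}J(u),\osc_{u\in X_0}J(u)\right)$ and forces the interval $\Lambda\subseteq\left(\inf_u J(u),\sup_u J(u)\right)$ delivered by Theorem \ref{CV} to lie in $\left(|\Omega|\inf_\xi F,|\Omega|\sup_\xi F\right)$, as claimed.

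It remains to verify that $\theta^{\star}\ge 0$: with the above choices $\psi\ge 0$ on $X_0$ (as $a,b>0$), $\eta\equiv 0$ and $H\ge 0$, so each quotient $\bigl(\psi(u)-\eta(u)\bigr)/H(J(u))$ defining $\theta^{\star}$ is nonnegative, and hence $\theta^{\star}\ge 0$. Consequently, for every $\mu>\theta^{\star}$ — in particular for all sufficiently large $\mu$ — Theorem \ref{CV} provides an open interval $\Lambda$ and a number $\rho>0$ such that, for each $\lambda\in\Lambda$, the abstract equation (equivalently, the weak formulation in the statement) admits at least three distinct solutions $u_1,u_2,u_3\in X_0$ with $\|u_j\|<\rho$; recalling that $\|u_j\|_{X_0}^2=\int_{\erre^n\times\erre^n}|u_j(x)-u_j(y)|^2|x-y|^{-(n+2s)}\,dxdy$ yields the stated bound by $\rho^2$. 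I expect the main obstacles to be the verification that $\psi'$ admits a continuous inverse on $E^*$ (the nonlocal Kirchhoff nonlinearity) and the density/truncation argument establishing the oscillation identity; the remaining items are routine within the framework of Section \ref{section2}.
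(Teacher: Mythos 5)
There is a fundamental mismatch here: the statement you were asked to prove is the abstract critical point theorem itself (Theorem \ref{CV}), but your proposal \emph{invokes} Theorem \ref{CV} as a black box and uses it to derive the concrete fractional Kirchhoff result (essentially Theorem \ref{lapfra0} in the Introduction, i.e.\ the specialization of Theorem \ref{Esistenza} to $K(x)=|x|^{-(n+2s)}$, $M(t)=a+bt$, $\eta\equiv 0$, $\varphi=H$). As an argument for the stated theorem this is circular and proves nothing; what you have written is instead a (largely correct, and close to the paper's Section~3) verification of the hypotheses of Theorem \ref{CV} for the choices $E=X_0$, $\psi(u)=\frac{1}{2}\widehat{M}(\|u\|_{X_0}^2)$, $J(u)=\int_\Omega F(u)\,dx$ — including the continuous invertibility of $\psi'$ via the Kirchhoff structure and the oscillation identity $\osc_{u\in X_0}J(u)=|\Omega|\osc_{\xi\in\erre}F(\xi)$, which is indeed the point needed to match the domain of $H$.

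For the record, the paper itself does not prove Theorem \ref{CV}: it is imported verbatim from Ricceri \cite[Theorem 1.6]{R3}, where it is deduced from the three critical points theorem of \cite[Theorem 3]{R2}. A genuine proof would have to work at that abstract level: one studies the family of energy functionals $u\mapsto \psi(u)-\eta(u)-\mu\,\varphi(J(u)-\lambda)$ (well defined since $\lambda\in\left(\inf_{E}J,\sup_{E}J\right)$ forces $J(u)-\lambda$ into the domain of $\varphi$), uses the nonnegativity of $\theta^{\star}$ together with $\mu>\theta^{\star}$ to establish the strict minimax inequality required by \cite[Theorem 3]{R2}, and then obtains, for $\lambda$ ranging in an open interval $\Lambda$, three critical points with a uniform norm bound $\rho$; the compactness of $J'$ and $\eta'$, the weak lower semicontinuity and coercivity of $\psi$, and the continuous inverse of $\psi'$ are exactly the structural hypotheses that make that machinery run. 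None of this appears in your proposal, so as a proof of the stated theorem it has a complete gap, even though as an \emph{application} of the theorem it is sound and mirrors the paper's own proof of Theorem \ref{Esistenza}.
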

 \begin{remark}\rm{
Note that, for a generic function $\psi:E\rightarrow\erre$, the symbol $\displaystyle\osc_{u\in E}\psi(u)$ denotes the number (possibly infinite) given by $$\displaystyle\osc_{u\in E}\psi(u):=\displaystyle\sup_{u\in E}\psi(u)-\displaystyle\inf_{u\in E}\psi(u).$$
Moreover, if $\psi$ is a $C^1$-functional, we say that the derivative $\psi'$ admits a continuous inverse on $E^*$ provided that there exists a continuous operator $T:E\rightarrow E^*$ such that
$$
T(\psi'(u))=u,
$$
for every $u\in E$.}
\end{remark}

\section{The Main Result}
Let $\mathcal{M}$ be the class of continuous functions $M:[0,+\infty)\rightarrow\erre$ such that:

\begin{itemize}
\item[$(C_M^1)$] $\displaystyle\inf_{t\geq 0} M(t)>0$;
\end{itemize}

\begin{itemize}
\item[$(C_M^2)$] \textit{there exists a continuous function $v_M: [0,+\infty)\rightarrow \erre$ such that
$$
v_M(tM(t^2))=t,
$$
for every $t\in [0,+\infty)$.}
\end{itemize}
\noindent Further, if $M\in \mathcal{M}$, set
$$\displaystyle
\widehat{M}(t):=\int_0^t M(s)ds,
$$
for every $t\in [0,+\infty)$.\par
Denote by $\mathcal{A}$ the class of all Carath\'{e}odory functions $f:\Omega\times\erre\rightarrow \erre$ such that
$$
\displaystyle\sup_{(x,t)\in\Omega\times\erre}\frac{|f(x,t)|}{1+|t|^{q-1}}<+\infty,
$$
for some $q\in [1,2^{*})$. Further, if $f\in \mathcal{A}$ we put
$$
F(x,\xi):=\displaystyle\int_0^{\xi}f(x,t)\,dt,
$$
for every $(x,\xi)\in \Omega\times\erre$.\par
 We recall that a \textit{weak solution} of problem $(P_{M,K,f}^{\mu,\lambda,h})$ is a function $u\in X_0$ such that
$$
\begin{array}{l} M(\|u\|_{X_0}^2){\displaystyle \int_Q \big(u(x)-u(y)\big)\big(\varphi(x)-\varphi(y)\big)K(x-y)dxdy}\\
\displaystyle \qquad\qquad\qquad\qquad=\mu h\left(\displaystyle\int_\Omega F(x,u(x))dx-\lambda\right)\int_\Omega f(x,u(x))\varphi(x)dx,\,\,\,\,\,\,\,
\end{array}
$$
for every $\varphi \in X_0$.\par
 For the proof of our result, we observe that
problem~$(P_{k,K,f}^{\mu,\lambda,h})$ has a variational structure, indeed it is the Euler-Lagrange equation of the functional $J_K:X_0\to \RR$ defined as follows
$$
J_K(u):=\frac 1 2 \widehat{M}(\|u\|_{X_0}^2) -{\mu}H\left(\displaystyle\int_\Omega F(x,u(x))dx-\lambda\right),
$$
where
$$\displaystyle
H(\xi):=\int_0^\xi h(t)dt,
$$
for every $\xi\in \erre$.\par

\indent Note that the functional $J_K$ is Fr\'echet differentiable in $u\in X_0$ and one has
$$
\langle J'_K(u), \varphi\rangle = M(\|u\|_{X_0}^2){\displaystyle \int_Q \big(u(x)-u(y)\big)\big(\varphi(x)-\varphi(y)\big)K(x-y)dxdy}
$$
$$
\qquad \qquad \qquad \qquad\, -h\left(\displaystyle\int_\Omega F(x,u(x))dx-\lambda\right)\int_\Omega f(x,u(x))\varphi(x)dx,
$$
for every $\varphi \in X_0$.\par
 Thus, critical points of $J_K$ are solutions to problem~$(P_{M,K,f}^{\mu,\lambda,h})$.
In order to find these critical points, we will make use of Theorem \ref{CV}.\par

Let us denote by
$$
\alpha_f:=\displaystyle\inf_{u\in X_0}\int_\Omega F(x,u(x))dx,\,\,\,\,\,\,\,\,\beta_f:=\displaystyle\sup_{u\in X_0}\int_\Omega F(x,u(x))dx,
$$
and
$$
\omega_f:=\beta_f-\alpha_f.
$$

\indent Finally, let
$$
\mathcal{R}:=\left\{u\in X_0, \int_\Omega F(x,u(x))dx \in (\alpha_f,\beta_f)\setminus\{0\}\right\}.
$$
\indent With the above notations our result reads as follows.

\begin{theorem}\label{Esistenza}
Let $s\in (0,1)$, $n>2s$ and let $\Omega$ be an open bounded set of
$\RR^n$ with Lipschitz boundary and
$K:\RR^n\setminus\{0\}\rightarrow(0,+\infty)$ be a map
satisfying $(\rm k_1)$--$(\rm k_3)$.
Moreover, let $f\in \mathcal{A}$ such that
\begin{equation}\label{F}
\sup_{(x,\xi)\in \Omega\times \erre}|F(x,\xi)|<+\infty,
\end{equation}
and
$$
\sup_{u\in X_0}\Big|\int_\Omega F(x,u(x))dx\Big|>0.
$$

\noindent Then, for every $M\in \mathcal{M}$ and every non-decreasing function $h:(-\omega_f,\omega_f)\rightarrow \erre$ with $h^{-1}(0)=\{0\}$, for every
$$
\mu>\inf_{u\in \mathcal{R}}\left\{\displaystyle\frac{\displaystyle  \widehat{M}(\|u\|_{X_0}^{2})}{\displaystyle 2H\left(\int_\Omega F(x,u(x))dx\right)}\right\}
$$
there exists an open interval $\Lambda\subseteq (\alpha_f,\beta_f)$ and a number $\rho>0$ such that, for each $\lambda\in \Lambda$, the problem~$(P_{k,K,f}^{\mu,\lambda,h})$
has at least three distinct weak solutions whose norms in $X_0$ are less than $\rho$.
\end{theorem}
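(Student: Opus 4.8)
The plan is to apply the abstract result of Theorem \ref{CV} with the identifications
$$
E := X_0, \qquad J(u) := \int_\Omega F(x,u(x))\,dx, \qquad \psi(u) := \frac{1}{2}\widehat{M}(\|u\|_{X_0}^2), \qquad \varphi := H,
$$
and $\eta \equiv 0$. By the results recalled in Section \ref{section2}, $X_0$ is a separable reflexive (indeed Hilbert) space. With these choices the abstract equation $\psi'(u) = \mu\varphi'(J(u)-\lambda)J'(u) + \eta'(u)$ is exactly the weak formulation of $(P_{M,K,f}^{\mu,\lambda,h})$, since $\psi'(u) = M(\|u\|_{X_0}^2)\langle u,\cdot\rangle_{X_0}$, $J'(u) = \int_\Omega f(x,u(x))(\cdot)\,dx$, and $\varphi' = H' = h$. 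It therefore suffices to verify, one by one, that every hypothesis of Theorem \ref{CV} is met.

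First I would dispose of the conditions on $J$, $\eta$ and $\varphi$. Because $f\in\mathcal{A}$ has subcritical growth, the compact embedding $X_0\hookrightarrow L^{\nu}(\RR^n)$ for $\nu\in[1,2^{*})$ makes $J$ a $C^1$-functional with compact derivative; moreover $J(0_E)=0$, and \eqref{F} gives $|J(u)|\le|\Omega|\sup_{(x,\xi)}|F(x,\xi)|<+\infty$, so $J$ is bounded, while $\sup_u|\int_\Omega F|>0$ forces $J$ to be non-constant. The choice $\eta\equiv0$ trivially satisfies all requirements. Next, observe that $\osc_{u\in E}J(u)=\beta_f-\alpha_f=\omega_f$, so $\varphi=H$ is correctly defined on $(-\omega_f,\omega_f)$; since $h$ is continuous, non-decreasing and $h^{-1}(0)=\{0\}$, the primitive $H$ is convex and $C^1$ with $H\ge0$, $H^{-1}(0)=\{0\}$ and range in $[0,+\infty)$, as demanded.

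The heart of the argument is the functional $\psi$, and here is where I expect the only genuine obstacle. Coercivity and sequential weak lower semicontinuity follow from $(C_M^1)$: indeed $\inf_{t\ge0}M(t)>0$ yields $\psi(u)\ge\frac12(\inf M)\|u\|_{X_0}^2$, and $\psi$ is the composition of the continuous non-decreasing map $\widehat{M}$ with the weakly lower semicontinuous map $u\mapsto\|u\|_{X_0}^2$. The delicate point is to prove that $\psi'$ admits a continuous inverse on $E^{*}$, and this is precisely where $(C_M^2)$ enters decisively. Identifying $E^{*}\cong E$ via the Riesz isomorphism, one has $\psi'(u)=M(\|u\|_{X_0}^2)u$, hence $\|\psi'(u)\|=\|u\|_{X_0}\,M(\|u\|_{X_0}^2)$. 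Setting $t:=\|u\|_{X_0}$ and applying $v_M$ to the relation $\|\psi'(u)\|=t\,M(t^2)$, condition $(C_M^2)$ gives $v_M(\|\psi'(u)\|)=t=\|u\|_{X_0}$, so the norm of $u$ is recovered continuously from $\psi'(u)$. I would then set $T(w):=w/M\big(v_M(\|w\|)^2\big)$ and check that $T$ is the sought continuous operator with $T(\psi'(u))=u$, the continuity of $v_M$ and $M$ together with $M\ge\inf M>0$ ruling out any degeneracy.

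Finally, with $\eta\equiv0$ and $J^{-1}\big((\alpha_f,\beta_f)\setminus\{0\}\big)=\mathcal{R}$, the number $\theta^{\star}$ of Theorem \ref{CV} becomes
$$
\theta^{\star}=\inf_{u\in\mathcal{R}}\frac{\widehat{M}(\|u\|_{X_0}^2)}{2H\big(\int_\Omega F(x,u(x))\,dx\big)},
$$
which is exactly the threshold in the statement; since $\widehat{M}\ge0$ and $H>0$ on $(\alpha_f,\beta_f)\setminus\{0\}$, we have $\theta^{\star}\ge0$. Hence, for every $\mu>\theta^{\star}$, Theorem \ref{CV} yields an open interval $\Lambda\subseteq(\alpha_f,\beta_f)$ and a number $\rho>0$ such that, for each $\lambda\in\Lambda$, problem $(P_{M,K,f}^{\mu,\lambda,h})$ possesses at least three distinct weak solutions with $\|u\|_{X_0}<\rho$, which is the desired conclusion.
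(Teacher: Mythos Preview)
Your proposal is correct and follows essentially the same approach as the paper: the same identifications $E=X_0$, $J$, $\psi$, $\varphi=H$, $\eta\equiv 0$ are made, and the hypotheses of Theorem~\ref{CV} are verified in the same way. The only cosmetic difference is your explicit inverse $T(w)=w/M\big(v_M(\|w\|)^2\big)$, whereas the paper writes $T(v)=\dfrac{v_M(\|v\|_{X_0})}{\|v\|_{X_0}}\,v$ (with $T(0)=0$); the two formulas coincide on the range of $\psi'$ and are equally valid, yours having the minor advantage of not requiring a separate definition at the origin.
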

\begin{proof}
Let us apply Theorem \ref{CV} by choosing $E:=X_0$, $\eta=0$, and
$$
 J(u):=\int_\Omega F(x,u(x))\,dx,\qquad \psi(u):=\frac 1 2 \widehat{M}(\|u\|_{X_0}^2)
$$
for every $u\in E$.\par
Since $f\in \mathcal{A}$, the functional $J$ is a $C^1$-functional with compact derivative (note that the embedding $j:E\hookrightarrow L^{q}(\Omega)$ is compact for every $q\in [1,2^*)$). Furthermore (by \eqref{F}) $J$ is clearly bounded.\par
 \indent Now, it is easy to see that $\psi$ is a $C^1$-funcitonal and, since $\widehat{M}$ is increasing, $\psi$ is also sequentially weakly lower semicontinuous.\par
  Let us prove that the derivative  $\psi':E\rightarrow E^*$ admits a continuous inverse. Since $E$ is reflexive, we identify $E$ with the topological dual $E^*$. For our goal, let $T:E\rightarrow E$ be the operator defined by
 $$
T(v):=\left\{\begin{array}{ll}
\displaystyle\frac{v_M(\|v\|_{X_0})}{\|v\|_{X_0}}v & \mbox{if}\, v\neq 0\\
0 & \mbox{if}\, v=0,
\end{array}\right.
$$
where $v_M$ appears in $(C_M^2)$.\par
 Thanks to the continuity of $v_M$ and $v_M(0)=0$, the operator $T$ is continuous in $E$.\par
 Moreover, for each $E\setminus\{0_E\}$, since $M(\|u\|^2_{X_0})>0$ (by $(C_M^1)$), one has
  $$
  T(\psi'(u))=T(M(\|u\|^2_{X_0})u)=\frac{v_M (M(\|u\|^2_{X_0})\|u\|_{X_0})}{M(\|u\|^2_{X_0})\|u\|_{X_0}}M(\|u\|^2_{X_0})u=u,
  $$
  \noindent as desired.\par
   Now, put
  $$
  \gamma:=\displaystyle\inf_{t\geq 0} M(t).
  $$
  So, $\gamma>0$ (by $(C_M^1)$) and
  $$
  \widehat{M}(t)\geq \gamma t,
  $$
  for every $t\in [0,+\infty)$. In particular, this implies that $\psi$ is coercive.\par
   In conclusion, let us take $\varphi:=H$. By our assumptions on $h$, it follows that the function $\varphi$ is non-negative, convex and $\varphi^{-1}(0)=\{0\}$.\par
    Then, the assertion of Theorem \ref{CV} follows and the existence of three weak solutions to our problem is established.
\end{proof}

\begin{remark}\label{Ricc}\rm{
Clearly, if the function $M$ is non-decreasing in $[0,+\infty)$, with $M(0)>0$, then the function $t\rightarrow tM(t^2)$ ($t\geq 0$) is increasing and onto $[0,+\infty)$, and so condition $(C_M^2)$ is satisfied. Taking into account the previous observations one can conclude that Theorem \ref{Esistenza} is the (non-perturbed) fractional analogous of \cite[Theorem 1.3]{R3} in which a nonlocal Dirichlet problem, in the classical framework,
was studied. If $g\in \mathcal{A}$, in analogy with the cited result, we point out that in Theorem \ref{Esistenza}, requiring that
$$
\sup_{(x,\xi)\in \Omega\times \erre}\max\left\{|F(x,\xi)|, \int_0^{\xi}g(x,t)dt\right\}<+\infty,
$$
\noindent instead of (\ref{F}), we have that for every $M\in \mathcal{M}$ and every non-decreasing function $h:(-\omega_f,\omega_f)\rightarrow \erre$ with $h^{-1}(0)=\{0\}$, for which the number
$$
\theta^*:=\displaystyle\inf_{u\in \mathcal{R}}\left\{\displaystyle\frac{\displaystyle  \widehat{M}(\|u\|_{X_0}^{2})-2\int_\Omega\left(\int_0^{u(x)}g(x,t)dt\right)dx}{\displaystyle 2H\left(\int_\Omega F(x,u(x))dx\right)}\right\}
$$
is non-negative, and for every $\mu>\theta^{*}$, there exists an open interval $\Lambda\subseteq (\alpha_f,\beta_f)$ and a number $\rho>0$ such that, for each $\lambda\in \Lambda$, the perturbed problem
$$
\left\{\begin{array}{ll}
-M(\|u\|_{X_0}^2)\mathcal L_K u=v(\mu,\lambda,h,f,g)&\mbox{in}\,\,\,\Omega\\
u=0 & \mbox{in}\,\,\,\erre^n\setminus\Omega,
\end{array}\right.
$$
where
$$
v(\mu,\lambda,h,f,g):=\mu h\left(\displaystyle\int_\Omega \left(\int_0^{u(x)}f(x,t)dt\right)dx-\lambda\right)f(x,u)+g(x,u)
$$
has at least three distinct weak solutions whose norms in $X_0$ are less than $\rho$.
}
\end{remark}

\begin{remark}\label{RK}\rm{
Fix $a,b>0$ and take
$$
M(t):=a+bt,
$$
for every $t\in [0,+\infty)$. Clearly condition $(C_M^1)$ and $(C_M^2)$ hold. Thus, as claimed in Introduction and bearing in mind Remark \ref{Ricc}, Theorem \ref{Esistenza} produces the existence of multiple weak solutions for the following fractional Kirchhoff-type problem depending on two
parameters:
$$
\left\{\begin{array}{ll}
-(a+b\|u\|_{X_0}^2)\mathcal L_K u=v(\mu,\lambda,h,f,g)&\mbox{in}\,\,\,\Omega\\
u=0 & \mbox{in}\,\,\,\erre^n\setminus\Omega.
\end{array}\right.
$$
}
\end{remark}

\begin{remark}\rm{Simple considerations explained in Section 2 prove that Theorem \ref{lapfra0} in Introduction is a consequence of Theorem \ref{Esistenza}.
}
\end{remark}

\indent In conclusion, we present a direct application of the main result.
\begin{es}\label{esempio0}\rm{
Let $s\in (0,1)$, $n>2s$ and let $\Omega$ be an open bounded set of $\erre^n$
with
Lipschitz boundary. Moreover, let $f:\erre\rightarrow \erre$ be the non-zero continuous function belonging to $\mathcal{A},$ with $$\displaystyle \sup_{\xi\in \erre}|F(\xi)|<+\infty$$ and let $M\in \mathcal{M}$. Then, owing to Theorem \ref{Esistenza}, for a sufficiently large $\mu$,
there exists an open interval $$\Lambda\subseteq \left(|\Omega|\displaystyle\inf_{\xi\in\erre}F(\xi),|\Omega|\displaystyle\sup_{\xi\in\erre}F(\xi)\right)$$ and a number $\rho>0$ such that, for each $\lambda\in \Lambda$, the following equation

$$
\begin{array}{l} M\left(\displaystyle\int_{\erre^{2n}}\frac{|u(x)-u(y)|^2}{|x-y|^{n+2s}}\,dxdy\right){\displaystyle \int_{\erre^{2n}}
\frac{(u(x)-u(y))(\varphi(x)-\varphi(y))}{|x-y|^{n+2s}}dxdy}\\
\displaystyle \qquad\qquad\qquad=\mu \frac{\left(\displaystyle\int_\Omega F(u(x))dx-\lambda\right)\displaystyle\int_\Omega f(u(x))\varphi(x)dx}{(|\Omega|\displaystyle\osc_{\xi\in\erre}F(\xi))^2-\left(\displaystyle\int_\Omega F(u(x))dx-\lambda\right)^2},\,\,\,\,\,\,\,
\end{array}
$$
for every
$$
\varphi\in H^{s}(\erre^n)\,\, {\rm such\,\, that}\,\, \varphi= 0\,\,\, {\rm a.e.\,\, in}\,\, \erre^n\setminus\Omega,
$$
has at least three distinct solutions $\{u_j\}_{j=1}^{3}\subset H^{s}(\erre^n)$,
such that $u_j=0$ a.e. in $\erre^n\setminus\Omega$, and
$$
\int_{\RR^n\times\RR^n}\frac{|u_j(x)-u_j(y)|^2}{|x-y|^{n+2s}}\,dxdy<\rho^2,
$$
\noindent for every $j\in \{1,2,3\}$.}
\end{es}
\begin{remark}\rm{We just observe that \cite[Theorem 3.1]{Molica} cannot be applied to the problem treated in the previous example.
}
\end{remark}

{\bf Acknowledgements.}  This paper was written when the first author was visiting professor at the University of Ljubljana in 2013. He expresses his gratitude to the host institution for warm hospitality.
  The manuscript was realized within the auspices of the GNAMPA Project 2013 titled {\it Problemi non-locali di tipo Laplaciano frazionario} and the SRA grants P1-0292-0101 and J1-5435-0101.


\begin{thebibliography}{99}

\bibitem{AP1} {\sc G. Autuori and P. Pucci},  {\em Kirchhoff systems with nonlinear source and boundary
damping terms}, {Commun. Pure Appl. Anal.} \textbf{9} (2010), 1161-1188.

\bibitem{AP2} {\sc G. Autuori and P. Pucci}, {\em Kirchhoff systems with dynamic boundary conditions}, {
Nonlinear Anal.} \textbf{73} (2010), 1952-1965.

\bibitem{AP3} {\sc G. Autuori and P. Pucci}, {\em Local asymptotic stability for polyharmonic Kirchhoff
systems}, {Appl. Anal.} \textbf{90} (2011), 493-514.

\bibitem{c1} {\sc X. Cabr\'{e} and Y. Sire}, \emph{Nonlinear equations for fractional Laplacians I: regularity, maximum principles, and Hamiltonian estimates}, Ann. I. H. Poincar\'{e} - AN {\bf 31} (1) (2014), 23-53.

\bibitem{c2} {\sc X. Cabr\'{e} and Y. Sire}, \emph{Nonlinear equations for fractional Laplacians II: existence, uniqueness,
and qualitative properties of solutions}, Trans. Amer. Math. Soc. {\bf 367} (2015), 911-941.

\bibitem{c3} {\sc X. Cabr\'{e} and J. Tan}, \emph{Positive solutions of nonlinear problems involving the square root of the
Laplacian}, Adv. Math. \textbf{224} (2010), 2052-2093.

\bibitem{valpal} {\sc E. Di Nezza, G. Palatucci and E. Valdinoci},
{\em Hitchhiker's guide to the fractional Sobolev spaces},
Bull. Sci. Math. \textbf{136} (5) (2012), 521-573.

\bibitem{k2} {\sc A. Krist\'{a}ly, V. R\u{a}dulescu and Cs. Varga}, {Variational Principles in Mathematical Physics, Geometry, and Economics:
Qualitative Analysis of Nonlinear Equations and Unilateral Problems}, Encyclopedia of Mathematics and its Applications, No.~\textbf{136}, \emph{Cambridge University Press}, Cambridge, 2010.

\bibitem{Molica} {\sc G. Molica Bisci}, {\em Fractional equations with bounded primitive},
Appl. Math. Lett. \textbf{27} (2014), 53-58.

\bibitem{R2}{\sc B. Ricceri}, \emph{On a three critical points theorem}, Arch. Math. (Basel) \textbf{75} (2000), 220-226.

\bibitem{R3}{\sc B. Ricceri}, \emph{A multiplicity result for nonlocal problems involving nonlinearities
with bounded primitive}, Stud. Univ. Bab\c{e}s-Bolyai, Math. \textbf{55} (2010), 107-114.

\bibitem{sY}{\sc R. Servadei}, {\em The Yamabe equation in a non-local setting},
Adv. Nonlinear Anal. \textbf{2} (3) (2013), 235-270.

\bibitem{sv}{\sc R. Servadei and E. Valdinoci}, {\em Lewy-Stampacchia type estimates for variational
inequalities driven by nonlocal operators}, Rev. Mat. Iberoam. \textbf{29} (3) (2013), 1091-1126.

\bibitem{svmountain}{\sc R. Servadei and E. Valdinoci}, {\em Mountain Pass solutions
for non-local elliptic operators}, J. Math. Anal. Appl. \textbf{389} (2012), 887-898.

\bibitem{svlinking}{\sc R. Servadei and E. Valdinoci}, {\em Variational methods for non-local operators of elliptic type}, Discrete Contin. Dyn. Syst. \textbf{33}, 5 (2013), 2105-2137.

\bibitem{servadeivaldinociBN}{\sc R. Servadei and E. Valdinoci}, {\em The Br\'{e}zis-Nirenberg result for the fractional Laplacian}, Trans. Amer. Math. Soc. {\bf 367} (2015), 67-102. 
\end{thebibliography}
\end{document}